\documentclass[11pt, a4paper, twoside]{article}

\usepackage{t1enc}
\usepackage[latin1]{inputenc}
\usepackage{graphicx}
\usepackage{amsthm}
\usepackage[T1]{fontenc}
\usepackage{amsmath}
\usepackage{amssymb}
\usepackage{stmaryrd}
\usepackage{color}
\usepackage{fourier}
\usepackage{makeidx}
\usepackage{scrtime}
\usepackage{fancyhdr}
\usepackage{enumerate}
 \usepackage[all]{xypic}
\usepackage[mathscr]{eucal}
\usepackage[german,UKenglish]{babel}
\usepackage{longtable}
\usepackage[small,bf]{caption}
\usepackage[backgroundcolor=green!30]{todonotes}

\usepackage{tikz}

%Bracket sizes which are wrong due to "fourier" are redefined

\makeatletter
\renewcommand{\big}{\bBigg@{1}}
\renewcommand{\Big}{\bBigg@{1.5}}
\renewcommand{\bigg}{\bBigg@{2.4}}
\renewcommand{\Bigg}{\bBigg@{3.2}}
\newcommand{\bigo}{\bBigg@{1.2}}
\makeatother

%aleph replacement

%textwidth experimental

\addtolength{\textwidth}{0.9cm}
\addtolength{\oddsidemargin}{-0.45cm}
\addtolength{\evensidemargin}{-0.45cm}

%Design of the page, headline, footline

\setlength{\headheight}{15.2pt}
\pagestyle{fancy}

\lhead[\thepage]{\rightmark} 	\lfoot{}
\chead{} 	\cfoot{}%\tiny{\thistime~on \today}}
\rhead[\leftmark]{\thepage} 	\rfoot{}

%Hyphenation of hyphenated words

%\useshorthands{"}\languageshorthands{ngerman}

\parindent0cm \setlength{\parskip}{1.5ex plus0.7ex minus0.7ex}

%Definition of a lot of operators

   %\DeclareMathOperator{\Spin}{Spin}

%\DeclareMathOperator{\SO}{SO}

\DeclareMathOperator{\data}{data}

%theorems etc.

 \theoremstyle{plain}
\newtheorem{satz}{Theorem}[section]
\newtheorem{lemma}[satz]{Lemma}

\newtheorem{korr}[satz]{Corollary}

\theoremstyle{definition}
\newtheorem{defi}[satz]{Definition}
 \theoremstyle{remark}

%Numbering

\numberwithin{equation}{section}
% \renewcommand{\theequation}{\arabic{chapter}.\alph{section}-\arabic{equation}}
% \def\thesection{\arabic{chapter}.\alph{section}}
% \renewcommand{\thesatz}{{\footnotesize \arabic{chapter}.\alph{section}}{(\roman{satz})}}

%Colours
\definecolor{sgreen}{rgb}{0.1,0.5,0.0}
\definecolor{sred}{rgb}{0.8,0.0,0.1}
\definecolor{sblue}{rgb}{0.1,0.0,0.4}
\definecolor{green}{rgb}{0.1,0.6,0.1}
\definecolor{lgreen}{rgb}{0.4,0.8,0.4}
\definecolor{red}{rgb}{0.6,0.1,0.1}
\definecolor{lred}{rgb}{0.8,0.4,0.4}
\definecolor{blue}{rgb}{0.1,0.2,0.6}
\definecolor{lblue}{rgb}{0.4,0.2,0.8}

%New commands, mainly abbreviations

\newcommand{\C}{\mathbb{C}}

\def\bigset(#1|#2){\big\{#1\, \big|\, #2\big\}}
\def\biggset(#1|#2){\bigg\{#1\, \bigg|\, #2\bigg\}}
\def\Bigset(#1|#2){\Big\{#1\, \Big|\, #2\Big\}}
\def\vectdreik(#1;#2;#3){\text{\tiny $\begin{pmatrix} #1 \\ #2 \\ #3 \end{pmatrix}$}}
\def\vectdrei(#1;#2;#3){\begin{pmatrix} #1 \\ #2 \\ #3 \end{pmatrix}}

\newcommand{\dirac}{\mathcal{D}}

\newcommand{\ci}{\mathtt{i}}

\newcommand{\Spinc}{\text{Spin}^{\C}}

\def\ddxi,#1{\frac{\partial}{\partial x_{#1}}}

\newcommand{\R}{{\mathbb{R}}}
\newcommand{\Z}{{\mathbb{Z}}}

\newcommand{\forget}[1]{{\huge$\mathbf{\cdots}$}}

\newcommand{\ddt}{\frac{\partial}{\partial t}}

\def\vekthq(#1,#2,#3,#4){\left(\begin{array}{c} #1 \\ #2 \\ #3 \\ #4 \end{array}\right)}

\newcommand{\Mp}{M^+_{\text{cut}}}
\newcommand{\Mm}{M^-_{\text{cut}}}
\newcommand{\Mpm}{M^\pm_{\text{cut}}}

\newcommand{\lies}{\mathfrak{s}_1}

\newcommand{\up}{\nearrow}

\usepackage{environ}
\NewEnviron{rr}{%
\begin{align*}
  \BODY
\end{align*}
}

% Hyphenation

\hyphenation{eigen-spaces}\hyphenation{eigen-space}

\defaulthyphenchar=127
%Title

%\title{Spectral properties of $\Spinc$ Dirac operators\\ on $S^3$, $S^1\times S^2$ and $T^3$}
%\author{J. Fabian Meier\\ Sonderforschungsbereich 701 \\ Universität Bielefeld}

%\makeindex
\title{$\Spinc$ quantization in odd dimensions}
\author{J. Fabian Meier}
\begin{document}
\maketitle
% \begin{titlepage}
% %\begin{titlepage}
% \vspace{2cm}
%   \begin{center}
%     \Large \textbf{Geometric Quantization: Odd Dimensions}
%   \end{center}
% \vspace{0.7cm}
% %\begin{figure}[ht]

% %\caption{Projection in direction of $a=(2,1,1)$ (blue arrow): $W$ is indicated by the light red plane, in which the red dots are lattice points in $W$ and the green dots are the other points of $\Lambda$. The light green area indicates a fundamental parallelogram.}
% %\end{figure}

% \vspace{2cm}
% \begin{center}
% {\large{An Introduction to $\Spinc$ quantization and related topics.}\\[1ex]}
% Winter semester 2011/12 at the university of Bonn\\[1ex]
% \tiny{\textbf{Preliminary Version}}
% \end{center}

% \vspace{1in}

% \begin{center}
% Fabian Meier
% \end{center}

% \vspace{.25in}

% \begin{center}
% August 2011
% \end{center}

% \end{titlepage}

% % \begin{titlepage}
% % \vspace{2cm}
% %   \begin{center}
% %     \Large \textbf{Spectral properties of $\Spinc$ Dirac operators\\ on $\mathbf{S^3}$, $\mathbf{S^1\times S^2}$ and $\mathbf{T^3}$}
% %   \end{center}
% % \vspace{4cm}
% % \begin{center}
% % Dissertation\\
% % zur Erlangung des Doktorgrads\\
% % der Fakult\"at f\"ur Mathematik \\
% % der Universit\"at Bielefeld
% % \end{center}

% % \vspace{1in}

% % \begin{center}
% % vorgelegt von \\
% % Fabian Meier
% % \end{center}

% % \vspace{.25in}

% % \begin{center}
% % Juli 2010
% % \end{center}

% % \end{titlepage}
% \vspace*{\fill}
% {\footnotesize Gedruckt auf alterungsbeständigem Papier \textdegree\textdegree ISO 9706} \thispagestyle{empty}
% %Getippt an einem Arbeitsplatz nach ISO 9241.
% \newpage

%\addtocounter{page}{1}
%\maketitle
\tableofcontents

\begin{abstract}
We define and discuss an extension of the $\Spinc$ quantization concept to odd-dimensional manifolds. After that  describe its relation to (the usual) even-dimensional $\Spinc$ quantization and how its famous properties like ``Quantization commutes with reduction'' can be regained in odd dimensions. At the end, we analyze the situation on 3-manifolds and give some examples.

%After a thorough discussion of the basic material we will describe the (symplectic) cutting scheme and then come to the $\Spinc$ version of the famous "Quantization commutes with reduction" theorem. 

\end{abstract}

\section{Odd-dimensional $\Spinc$ quantization}
\label{sec:odd-dimens-spinc}

Unlike geometric quantization efforts using symplectic or almost complex structures, $\Spinc$ quantization, as it is discussed in \cite{thesisfuchs} and \cite{da2000quantization}, seems to be not so much dependent on the assumption $\dim M = 2m$. Of course, if you naively try to replace $2m$ by $2m+1$ in all definitions, you fail because your equivariant $\Spinc$ Dirac operator will not split, so you cannot define a (non-zero) index.

Nevertheless, if you instead consider a family of operators parametrised by $S^1$ you get an equivariant spectral flow, i.e. an element of $K^1_{S^1}(S^1) \cong K^1(S^1) \otimes R(S^1)$. Through the isomorphism $K^1(S^1) \cong K^0(\{\text{pt}\})$ this can be seen as quantization in the sense of \cite{thesisfuchs}. We give a more precise definition in the next section.

This index is closely related to the even-dimensional index, using maps of the form $M \mapsto M \times S^1$.

\section{Definition of the Structure}
\label{sec:definition-structure}

Let $M$ be a (closed, Riemannian, oriented) $\Spinc$ manifold of dimension $2m-1$ with differentiable $S^1$-action. We always assume that  an $S^1$-equivariant $\Spinc$ structure $\tilde P_M$ on $M$ is chosen. In the following discussion, $\alpha$  always denotes an element of $H^1_{S^1}(M;\Z)$, which will be interpreted either as the $S^1$-invariant first cohomology group or as $S^1$-equivariant harmonic one-forms on $M$. Our quantization will be a map, which associates to a pair $(\tilde P_M, \alpha)$ an element in $K^1_{S^1}(S^1)$, being a group homomorphism in the second component.

The $\Spinc$ structure $\tilde P_M$ has a (non-splitting) associated vector bundle $\mathcal{S}_\C(M)$ that comes along with a family of Dirac operators parametrised by connection one forms on the determinant line bundle. Everything is assumed to be $S^1$-equivariant. Now fix one Dirac operator $\dirac_0$ and describe all other Dirac operators by $\dirac_\gamma := \dirac_0 + \ci c_{\gamma}$, $\gamma\in \Omega^1(M)$, where $c$ means Clifford multiplication.

Now we can associate to $\alpha$ a family $\dirac_{t\alpha}$, $t\in
[0,1]$ of Dirac operators. Since $H^1_{S^1}(M;\Z)$ is isomorphic to
the space $[M, S^1]_{S^1}$ of $S^1$-invariant homotopy classes of maps
to $S^1$, we can choose an $S^1$-invariant map $u_\alpha$ for our
fixed element $\alpha$. A direct calculation shows that
$\dirac_{\alpha} = u_\alpha^{-1} \dirac_0 u_\alpha$, which
particularly shows that the beginning and end of our familiy have the
same spectrum. Furthermore, we can interpret $u_\alpha$ as invertible
$S^1$-invariant linear map on the Hilbert space $H_M$ defined by
$\Gamma_{L^2}(\mathcal{S}_\C(M))$ (with inherited $S^1$-action). The space
$\text{Gl}_{S^1}(H_M)$ is path-connected (it is in general \emph{not} contractible, as Kuiper's theorem proves for the non-equivariant case), so we can transform $u_\alpha$ into $1$ by a family
$u_\alpha^t$. Here we have to be aware of the fact that $u_\alpha^{t}$
will in general be no bundle isomorphism but just a Hilbert space
map. Nevertheless we know that conjugating with $u_\alpha^t$ will not
change the $S^1$-equivariant spectrum of an operator on $H_M$.

So putting together the family $\dirac_{t\alpha}$ with $(u_\alpha^{t})^{1-}\dirac_\alpha u_\alpha^t$ we get a cyclic family of operators, where the spectrum is constant during the second part. Therefore, the equivariant spectral flow of this family (which equals the spectral flow of the first half) defines an element of $K^1_{S^1}(S^1)$. Now $K^1_{S^1}(S^1) \cong K^0_{S^1}(\{\text{pt}\})$; we choose an isomorphism in the following way: The embedding $\{\text{pt}\} \subset S^1$ induces an isomorphism $H_0(\{\text{pt}\}) \cong H_0(S^1)$, which by tensoring with $R(S^1)$,  Poincare-duality and the Chern character gives an isomorphism of the two spaces we talked about at the very beginning of this much too long sentence. This will be our \emph{quantization}. It is a virtual representation of $S^1$.

\begin{lemma}
  For fixed $\tilde P_M$, the quantization is a group homomorphism $Q(M)$ from $H^1_{S^1}(M;\Z)$ to $K^1_{S^1}(S^1)$. 
\end{lemma}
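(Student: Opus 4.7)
The plan is to compute $Q(M)(\alpha+\beta)$ by deforming its defining path and to recognise the two summands $Q(M)(\alpha)$ and $Q(M)(\beta)$ by means of the conjugation identity $\dirac_\alpha = u_\alpha^{-1}\dirac_0 u_\alpha$. Since the $S^1$-equivariant spectrum of the cyclic family is constant along its second half (where the operator is conjugated by the path $u_\alpha^t$), the resulting class in $K^1_{S^1}(S^1)$ depends only on the first half, i.e., on the equivariant spectral flow of $t\mapsto \dirac_{t\alpha}$, $t\in[0,1]$. Moreover, the identification $K^1_{S^1}(S^1)\cong K^0_{S^1}(\{\text{pt}\})$ fixed above is additive by construction, so it suffices to establish
\[
\mathrm{sf}^{S^1}\bigl(\dirac_{t(\alpha+\beta)}\bigr) = \mathrm{sf}^{S^1}\bigl(\dirac_{t\alpha}\bigr) + \mathrm{sf}^{S^1}\bigl(\dirac_{t\beta}\bigr).
\]

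The first step is to replace the straight family $t\mapsto \dirac_{t(\alpha+\beta)}$ by the concatenation of $t\mapsto \dirac_{t\alpha}$ and $t\mapsto \dirac_{\alpha + t\beta}$. Both paths run from $\dirac_0$ to $\dirac_{\alpha+\beta}$ and both lie inside the contractible affine space $\{\dirac_0 + \ci c_\gamma : \gamma \in \Omega^1(M)\}$, so they are homotopic rel endpoints through $S^1$-equivariant self-adjoint elliptic operators sharing the principal symbol of $\dirac_0$. Equivariant spectral flow is additive under concatenation and invariant under such a homotopy, yielding
\[
\mathrm{sf}^{S^1}(\dirac_{t(\alpha+\beta)}) = \mathrm{sf}^{S^1}(\dirac_{t\alpha}) + \mathrm{sf}^{S^1}(\dirac_{\alpha + t\beta}).
\]

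The second summand is handled by the observation that $u_\alpha$ takes values in $U(1)$ and therefore commutes with the bundle endomorphism $\ci c_{t\beta}$. Explicitly,
\[
\dirac_{\alpha + t\beta} = \dirac_\alpha + \ci c_{t\beta} = u_\alpha^{-1}\dirac_0 u_\alpha + u_\alpha^{-1}(\ci c_{t\beta})u_\alpha = u_\alpha^{-1}\dirac_{t\beta}\, u_\alpha,
\]
so the entire family $\{\dirac_{\alpha + t\beta}\}_{t\in[0,1]}$ arises from $\{\dirac_{t\beta}\}$ by simultaneous conjugation with the single fixed $S^1$-invariant invertible $u_\alpha$. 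Such a conjugation preserves the $S^1$-equivariant spectrum at every instant, and therefore also the equivariant spectral flow, so $\mathrm{sf}^{S^1}(\dirac_{\alpha + t\beta}) = \mathrm{sf}^{S^1}(\dirac_{t\beta}) = Q(M)(\beta)$, from which the lemma follows.

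The hardest step will be pinning down the two invariance properties of the equivariant spectral flow that are invoked above --- invariance under homotopy rel endpoints inside the affine slice, and invariance of a whole family under simultaneous conjugation by a fixed element of $\text{Gl}_{S^1}(H_M)$. Non-equivariantly both are standard; the equivariant refinement requires one to verify that the Kato-type selections of local spectral projections used to define the spectral flow can be carried out $S^1$-equivariantly, which should be routine using compactness of $S^1$ together with the fact that every operator in sight respects the isotypic decomposition of $H_M$.
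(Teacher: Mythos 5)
Your argument is correct and follows essentially the same route as the paper's own proof: replace the straight path to $\dirac_{\alpha+\beta}$ by the concatenation through $\dirac_\alpha$ using homotopy invariance and additivity of the equivariant spectral flow under concatenation, and identify the second leg's spectral flow with $Q(M)(\beta)$. The only difference is that you explicitly justify the base-point independence (via simultaneous conjugation of the whole family $\dirac_{\alpha+t\beta}$ by the fixed $S^1$-invariant $u_\alpha$), a step the paper merely asserts, so your write-up is if anything slightly more complete.
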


\begin{proof}
  Let $\dirac_0$ be a fixed Dirac operator. Then the spectral flow of $\dirac_\beta + t \ci c_{\alpha}$ is independent of $\beta$; futhermore, it is independent of the path which connects $\dirac_\beta$ and $\dirac_{\beta+ \alpha}$ by homotopy invariance of the spectral flow. Therefore, we can connect $\dirac_0$ and $\dirac_{\alpha + \beta}$ by touching $\dirac_\alpha$. This shows that the spectral flow of $\alpha + \beta$ is just the sum of the two spectral flows.
\end{proof}

\section{Going up the stairs}
\label{sec:going-up-stairs}

We now want to relate even and odd dimensional quantizations. For this section, let $M^{2m-1}$ be an odd-dimensional and $N^{2n}$ be an even-dimensional manifold; $X$ will serve as placeholder for both.

The easiest way to "go up one dimension" is to replace $X$ by $X\times S^1$. This again should be an $S^1$-manifold, so that we have to combine the action on $X$ with an action on $S^1$. Every non-trivial action on $S^1$ will force $X\times S^1$ to be a fixed-point-free space; this leads to a zero index (we already know this from the even-dimensional case, but we will later see that is also true in odd dimensions). So we take a trivial $S^1$.

Taking the only $\Spinc$ structure on $S^1$, we get a $\Spinc$ structure $\tilde P_{X\times S^1}$ on $X\times S^1$.

Let $\data(X)$ be the data we need on $X$ to define a quantization, i.e. for $\dim X$ even this is $\{\text{$\Spinc$ structures of $X$}\}$ and for $\dim X$ odd we have  $\{\text{$\Spinc$ structures of $X$}\} \times H^1_{S^1}(X;\Z)$. Now we want to find a map $\up : \data(X) \to \data(X\times S^1)$ so that the following diagram commutes:
\begin{align}
\label{eq:6}
         \xymatrix { \data(X) \ar[d]_{\up} \ar[r]^{Q(X)} & K^0_{S^1}(S^1) \\
         \data(X\times S^1) \ar[ru]_{Q(X\times S^1) }}
      \end{align}
      \begin{defi}
        Let $e_{S^1}$ be the positive generator of $H^1_{S^1}(S^1)  = H^1(S^1)$. Then we define $\up$ to be
        \begin{itemize}
        \item the map which sends $(\tilde P_M, \alpha)$ to $\tilde P_M + \alpha \cup e_{S^1}$, where addition of $\alpha \cup e_{S^1} \in H^2_{S^1}(M;\Z)$ means twisting with the respective line bundle. 
        \item or the map which sends $\tilde P_N$ to $(\tilde P_N, e_{S^1})$.
        \end{itemize}
      \end{defi}
      \begin{satz}
        The map $\up$ just defined makes \ref{eq:6} commutative.
      \end{satz}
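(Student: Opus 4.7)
The plan is to express both composites in diagram \ref{eq:6} as the same $S^1$-equivariant index of a $\Spinc$ Dirac operator on a suitable product manifold. The analytic input is the equivariant version of the spectral-flow-equals-index theorem: for a closed loop $(\dirac_t)_{t \in S^1}$ of $S^1$-equivariant self-adjoint Dirac-type operators, the equivariant spectral flow coincides, under the isomorphism $K^1_{S^1}(S^1) \cong R(S^1)$ fixed in Section~\ref{sec:definition-structure}, with the equivariant index of the suspended operator $\partial_t + \dirac_t$ on the associated mapping torus.

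\textbf{Case 1 (odd $X=M$).}
For the loop assembled in Section~\ref{sec:definition-structure}, the gauge function $u_\alpha: M \to S^1$ forces the suspended bundle on $M \times S^1$ to be $\mathcal{S}_\C(M) \otimes L_\alpha$, where
\begin{align*}
L_\alpha \;=\; \bigl(\C \times M \times [0,1]\bigr)\big/ \bigl((z,m,0) \sim (u_\alpha(m)\,z,\, m,\, 1)\bigr)
\end{align*}
is the line bundle on $M \times S^1$ with equivariant first Chern class $\alpha \cup e_{S^1}$ (a K\"unneth computation using that $u_\alpha$ classifies $\alpha$). The suspended operator is then the twisted $\Spinc$ Dirac operator, and the spectral-flow theorem yields
\begin{align*}
Q(M)(\tilde P_M,\alpha) \;=\; \text{index}_{S^1}\bigl(\dirac_{M\times S^1}^{\tilde P_M \otimes L_\alpha}\bigr) \;=\; Q(M\times S^1)\bigl(\up(\tilde P_M,\alpha)\bigr).
\end{align*}

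\textbf{Case 2 (even $X=N$).}
The map $\up$ sends $\tilde P_N$ to $(\tilde P_N, e_{S^1})$ on $N \times S^1$. Applying case~1 with $M=N\times S^1$ and $\alpha=e_{S^1}$ expresses $Q(N\times S^1)(\tilde P_N, e_{S^1})$ as the equivariant index on $(N\times S^1)\times S^1 = N\times T^2$ of the $\Spinc$ Dirac operator twisted by the line bundle with $c_1^{S^1} = e_{S^1} \cup e'_{S^1}$, i.e.\ the pullback of the Poincar\'e bundle $\mathcal{P}$ on the two external $S^1$-factors. Since the acting $S^1$ is trivial on both of these factors, multiplicativity of the equivariant index together with the standard Riemann--Roch computation $\text{index}(\dirac_{T^2}^{\mathcal{P}}) = \int_{T^2} c_1(\mathcal{P}) = 1$ gives
\begin{align*}
\text{index}_{S^1}\bigl(\dirac_{N\times T^2}^{\mathcal{P}}\bigr) \;=\; \text{index}_{S^1}(\dirac_N)\cdot 1 \;=\; Q(N)(\tilde P_N).
\end{align*}

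\textbf{Main obstacle.}
The real work lies in case~1: one must verify that the suspended bundle produced by gluing along the Hilbert-space path $u_\alpha^t$ agrees, as an $S^1$-equivariant bundle, with $\mathcal{S}_\C(M)\otimes L_\alpha$ up to a homotopy of Fredholm families that preserves the equivariant index (the intermediate stages are only Hilbert-space bundles, not genuine spinor bundles), and that the identification $c_1^{S^1}(L_\alpha) = \alpha \cup e_{S^1}$ respects the equivariant refinement. Both $u_\alpha$ and the gauge path were chosen $S^1$-invariant, so once these matters are pinned down the equivariance is automatic and case~2 follows formally.
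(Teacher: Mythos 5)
Your proposal follows essentially the same route as the paper: for the odd case you identify the spectral flow with the index of the suspended operator $\partial_t + \dirac_t$ on the mapping torus $M\times S^1$ twisted by the line bundle with equivariant first Chern class $\alpha\cup e_{S^1}$ (the paper phrases this via the APS index on $M\times[0,1]$ with cancelling boundary terms, citing Atiyah--Patodi--Singer), and for the even case you apply $\up$ twice, reducing to multiplicativity of the index and the fact that the Dirac operator on $T^2$ twisted by the Poincar\'e-type bundle with $c_1 = e^1_{S^1}\cup e^2_{S^1}$ has index $1$. The argument is correct and matches the paper's; your remark under \emph{Main obstacle} about reconciling the Hilbert-space gauge path with a genuine bundle isomorphism is a fair point that the paper glosses over as well.
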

      \begin{proof}
        We first argue for $M$:

  An equivalence of indices of this kind was mentioned in \cite{atiyah1984anomalies}. In a similar fashion, we want to construct an argument out of \cite{atiyah1976spectral} and \cite{atiyah1975spectral}. Since the periodicity of $\dirac_t$ is produced by a twist, we first look at $M \times [0,1]$ with an equivariant spectral flow from $0$ to $1$. Following \cite{atiyah1976spectral}, p.95, we can identify this spectral flow with the APS-index of $\ddt + \dirac_t$ on the manifold $M \times [0,1]$. Now the boundary terms only depend on the spectrum of $\dirac_0$ and $\dirac_1$, which is equal, so they vanish and we get the index of $\ddt + \dirac_t$ over a twisted bundle over $M \times S^1$, which is the same as the even-dimensional Dirac operator after idenfying the $\mathcal{S}^+$ and $\mathcal{S}^-$-terms with the help of $\ddt$. 

Now for $N$: If we apply $\up$ twice, we again get an even-dimensional manifold $N\times S^1\times S^1$, where the $\Spinc$-structure on $\tilde P_{N\times S^1 \times S^1}$ is given by $\tilde P_{N} + e^1_{S^1} \cup e^2_{S^1}$. If we take $\tilde P_{S^^1 \times S^1} + e^1_{S^1} \cup e^2_{S^1}$ on $S^1 \times S^1$, we get a Dirac operator $\dirac_{S^1\times S^1}$ with index 1. The construction leads to the situtation, that the index on $M\times S^1\times S^1$ ist just the product of the indices on both spaces, so we see that after applying $\up$ twice, we again have the same quantization. As we already proved the odd-dimensional case, this is enough for the even-dimensional case.
      \end{proof}

      \begin{korr}
        In every dimension, there are manifolds with nontrivial quantization.
      \end{korr}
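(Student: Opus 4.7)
The plan is induction on dimension, using the operation $\up$ from the preceding theorem as the inductive step. Since the theorem asserts $Q(X\times S^1)\circ \up = Q(X)$, any datum in $\data(X)$ whose quantization is nonzero automatically lifts, via $\up$, to a datum in $\data(X\times S^1)$ one dimension higher with the same nonzero quantization. It therefore suffices to exhibit a single base case and then iterate.

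For the base case I would take $N$ to be a single point carrying its unique $\Spinc$ structure and the (necessarily trivial) $S^1$-action. The spinor space is then the one-dimensional half-spinor representation of $\Spinc(0)$, the Dirac operator vanishes, and the equivariant index is the trivial representation $1\in R(S^1)\cong K^0_{S^1}(\{\text{pt}\})$, which is manifestly nonzero. If one prefers to avoid a $0$-manifold as a starting point, the same argument starts one step later from $S^2=\Peins$ with its standard Dolbeault-compatible $\Spinc$ structure, whose quantization is again the trivial representation.

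Now I iterate $\up$. After one application I obtain $(S^1,e_{S^1})$ in dimension $1$, with quantization $1\in K^1_{S^1}(S^1)\cong R(S^1)$; after a second application I obtain $S^1\times S^1$ with $\Spinc$ structure twisted by $e^1_{S^1}\cup e^2_{S^1}$ in dimension $2$; and so on. After $d$ iterations I have produced an explicit datum on a $d$-dimensional manifold whose quantization equals $1$, proving the corollary in every dimension $d\geq 0$.

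The only step requiring any real attention is the base case, namely verifying that under the identification $K^1_{S^1}(S^1)\cong K^0_{S^1}(\{\text{pt}\})$ fixed in Section~\ref{sec:definition-structure}, the image of the point datum is indeed the nonzero class $1\in R(S^1)$ and not zero. This is a direct bookkeeping check against the Chern character and Poincar\'e-duality normalizations used to pin down that isomorphism, and is routine; everything else in the argument is formal, following from the theorem by iteration.
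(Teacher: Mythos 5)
Your proposal is correct and follows essentially the same route as the paper: the paper's proof is simply ``$Q(S^2)\neq 0$, then induct,'' i.e.\ iterate $\up$ using the commutativity of diagram~\ref{eq:6}, exactly as you do. Your only deviation is the choice of base case --- starting from a point (or $S^1$) rather than $S^2$, which has the mild advantage of also covering dimension $1$, though the $0$-dimensional case sits slightly outside the paper's stated setup ($M^{2m-1}$, $N^{2n}$ with $m,n\geq 1$), so the $S^2$ fallback you mention is the safer base point.
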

      \begin{proof}
        We know that $Q({S^2})\neq 0$ and can then proceed by induction.
      \end{proof}

\section{The fixed point formula}
\label{sec:fixed-point-formula}

The even-dimensional $\Spinc$ quantization, described by its character $\chi$, can be calculated by a fixed point formula stemming from the Atiyah-Segal-Singer-index theorem. It looks like this:

 %There is an $\eps > 0$, so that for $v\in \lies \cong \R$ with $0<v<\eps$ the following expressions are equal (as elements of $\C$):
  \begin{align*}
  \chi\big(\exp_{S^1}(v)\big) &=  \sum_{F \subset N^{S^1}} (-1)^F \cdot (-1)^{m(F)} \cdot \int_F \exp\big(\tfrac12 \tilde c_1(L|_F)\big)(v) \cdot \hat A(TF) \cdot \hat A_e(NF)(v).
  \end{align*}

Here, $\exp_{S^1}$ is the exponential map of the Lie-Group $S^1$, mapping an element $v\in \lies \cong \R$ to $S^1$, so that the left hand side is an element of $\C$. Its value in any neighbourhood of $0\in \lies$ determines $\chi$ and therefore $Q(M)$. 

On the right side, we sum integrals over the (finitely many) fixed point components. Here, $m(F)$ is the complex codimension of $F$ and $(-1)^F$ a sign depending on orientations (which is discussed in \cite{thesisfuchs} and is of no great importance here). In the integral itself we use equivariant characteristic classes for the trivial $S^1$-space $F$. Here, $\tilde c_1$ describes the equivariant first Chern class, $\hat A$ the A-roof-class and $ \hat A_e$ the quotient of the equivariant A-roof-class and Euler-class. The very last term only exists for a bundle which does not contain trivial representations and is only well-defined for small $v$. This formula was introduced in \cite{da2000quantization}; a thorough discussion and proof can be found in \cite{geomquant}. Notice that the terms in the integral are of the form 
\begin{align*}
\sum \text{(cohomology-class)} \cdot \text{(representation of $S^1$)}, 
\end{align*}
so that the integral transforms this terms into the character of a (virtual) representation (integrals of classes of the wrong dimension are defined to be zero). 

Since $N = M \times S^1$ has the same quantization as $M$, we can replace $N$ by $M \times S^1$ in this formula. Our aim is to derive a simplified version of this result, eliminating the detour over $N$ that we took.

First of all, all fixed point sets in $M \times S^1$ are of the form $F \times S^1$ because the action on $S^1$ is trivial. We especially see that all fixed point sets in $M$ are odd-dimensional; particularly, we have no isolated fixed points anymore. The integral $\int_{F\times S^1}$ will be thought of as iterated integral $\int_F \int_{S^1}$; the inner integral will be computed in the next paragraphs.

We now want to understand the three terms involved in the formula and start at the very end: The bundle $N(F \times S^1)$ in $T(M \times S^1)$ is (equivariantly) the same as the bundle $\pi_{S^1}^*(NF)$ where $\pi_{S^1} : F \times S^1 \to F$ denotes the projection (sometimes also the projection of $M \times S^1$ to $M$). Since $\pi_{S^1}^*$ is a group homomorphism commuting with characteristic classes, we can replace $A_e(N(F\times S^1))$ by $\pi_{S^1}^*(A_e(NF))$.

For $A(T(F\times S^1))$ notice that $T(F\times S^1) = \pi_{S^1}^* ( TF) \oplus \pi_{F} ^* (TS^1)$. The A-roof class is multiplicative under direct sums of vector bundles; futhermore it is just 1 on trivial bundles like $TS^1$. So we get $\pi^*_{S^1}(A(TF))$.

The first Chern class of the determinant line bundle splits into two parts: The bundle $L$ over $M$ gives us a class $\exp\big(\tfrac12 \pi^*_{S^1}(\tilde c_1(L))\big)$ while the twisting with the bundle of (equivariant) Chern class $\alpha \cup e_{S^1}$ gives an extra term $\exp(\alpha \cup e_{S^1})$. So the situation looks like this:
\begin{align*}
  \chi \big(&\exp_{S^1}(v)\big)\\
 &=  \sum_{F \subset M^{S^1}} (-1)^F \cdot (-1)^{m(F)} \cdot \int_F \int_{S^1} \exp(\alpha|_F \cup e_{S^1}) \cdot \pi_{S^1}^*\Big(\exp\big(\tfrac12 \tilde c_1(L|_F)\big)(v) \cdot \hat A(TF) \cdot \hat A_e(NF)(v)\Big) \\
  &=  \sum_{F \subset M^{S^1}} (-1)^F \cdot (-1)^{m(F)} \cdot \int_F  \bigg(\exp\big(\tfrac12 \tilde c_1(L|_F)\big)(v) \cdot \hat A(TF) \cdot \hat A_e(NF)(v) \int_{S^1} \exp(\alpha|_F \cup e_{S^1})\bigg) \\
  &=  \sum_{F \subset M^{S^1}} (-1)^F \cdot (-1)^{m(F)} \cdot \int_F  \exp\big(\tfrac12 \tilde c_1(L|_F)\big)(v) \cdot \hat A(TF) \cdot \hat A_e(NF)(v) \cdot \alpha|_F.
\end{align*}
% Now none of the terms after the first one contains a $e_{S1}$. For the integral to be non-zero we have to have exactly one of these terms in the product. This leaves us with
% \begin{align*}
%   \chi \big(\exp_{S^1}(v)\big) &=  \sum_{F \subset M^{S^1}} (-1)^F \cdot (-1)^{m(F)} \cdot \int_F  \alpha|_F \cdot \exp\big(\tfrac12 \tilde c_1(L|_F)\big)(v) \cdot \hat A(TF) \cdot \hat A_e(NF)(v).
% \end{align*}
Note that this again shows that the quantization is a group homomorphism in $H^1_{S^1}(M;\Z)$ and furthermore, that it only depends on the value of $\alpha$ on the different fixed point components (if $\alpha|_F = 0$ the component $F$ does not deliver anything for $Q(M)$).

\section{Additivity and $[Q,R]$}
\label{sec:additivity-q-r}

The important theorems for "Additivy under Cutting" and "Quantization commutes with Reduction" can now be easily transfered to the odd-dimensional world. For that we first have to make some definitions.

As in the even-dimensional case, let $Z \subset M$ be a splitting hypersurface with a free $S^1$-action. Then the reduced manifold $M_{\text{red}}$ is given by $Z/S^1$. The cut-spaces $\Mpm$ are constructed as in the even-dimensional case. To define the quantization of $M_{\text{red}}$ and $\Mpm$ we have to carry the invariant one-form $\alpha$ with us: We take $(\alpha|Z)/S^1$ on $M_{\text{red}}$ and a similar restriction on $\Mpm$. Then we have

\begin{satz}
  We have $Q(M) = Q(\Mp) + Q(\Mm)$ and $[Q,\, R]$. 
\end{satz}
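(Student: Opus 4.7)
The plan is to reduce both statements to their already-known even-dimensional counterparts by means of the $\up$ construction from the previous section. The key observation is that the cut-space construction and the symplectic (or $\Spinc$) reduction commute, up to canonical identification, with taking the product $X \mapsto X \times S^1$ equipped with the trivial $S^1$-action on the second factor, and that the $\up$ map of data also commutes with these two operations. Once these compatibilities are in place, the commutativity of diagram (\ref{eq:6}) lets one transport the even-dimensional results known for $M \times S^1$ back down to $M$.

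For additivity, I would first observe that $Z \times S^1 \subset M \times S^1$ is a splitting hypersurface with free $S^1$-action, and that its associated even-dimensional cut spaces are canonically $\Mp \times S^1$ and $\Mm \times S^1$. Next I would verify that the twisted $\Spinc$ structure $\up(\tilde P_M, \alpha) = \tilde P_M + \alpha \cup e_{S^1}$ restricts on each cut piece to $\up$ applied to the corresponding odd-dimensional datum $(\tilde P_{\Mpm}, \alpha|_{\Mpm})$. Even-dimensional additivity then gives
\begin{align*}
Q\big(\up(\tilde P_M, \alpha)\big) \,=\, Q\big(\up(\tilde P_{\Mp}, \alpha|_{\Mp})\big) + Q\big(\up(\tilde P_{\Mm}, \alpha|_{\Mm})\big),
\end{align*}
and invoking the commutativity of (\ref{eq:6}) on each of the three terms translates this identity into $Q(M) = Q(\Mp) + Q(\Mm)$.

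The $[Q,R]$ statement follows along the same lines. Since the $S^1$-action is trivial on the second factor, $(Z \times S^1)/S^1 = M_{\text{red}} \times S^1$, and the restricted-then-descended class $(\alpha|_Z \cup e_{S^1})/S^1$ equals $((\alpha|_Z)/S^1) \cup e_{S^1}$, i.e.\ exactly $\up$ applied to the odd-dimensional reduction datum $(\tilde P_{M_{\text{red}}}, (\alpha|_Z)/S^1)$. The even-dimensional $[Q,R]$ theorem applied to $M\times S^1$ identifies the $S^1$-invariant part of $Q(M\times S^1)$ with $Q(M_{\text{red}}\times S^1)$; pulling both sides back through (\ref{eq:6}) and using that the isomorphism $K^1_{S^1}(S^1) \cong K^0_{S^1}(\{\text{pt}\})$ fixed in Section~\ref{sec:definition-structure} commutes with taking $S^1$-invariants then yields the odd-dimensional $[Q,R]$.

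The main obstacle, I expect, is not analytic but combinatorial: one must check with some care that the twist by $\alpha \cup e_{S^1}$, the cutting along $Z$, and the reduction along $Z$ all commute with $\up$ at the level of equivariant $\Spinc$ data, and that the canonical $\Spinc$ structure on the auxiliary $S^1$-factor used by $\up$ is preserved under each of these operations. These compatibility checks, together with the observation that the fixed point formula of Section~\ref{sec:fixed-point-formula} already exhibits the correct dependence on $\alpha|_F$, are the substantive content of the argument; everything else is an immediate consequence of the corresponding even-dimensional theorem applied to $M \times S^1$.
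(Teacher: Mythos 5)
Your proposal follows essentially the same route as the paper: pass to $M \times S^1$ with splitting hypersurface $Z \times S^1$, check that the twist by $\alpha \cup e_{S^1}$ commutes with the cutting and reduction constructions on the $\Spinc$ data, and then invoke the even-dimensional theorems via the commutativity of diagram (\ref{eq:6}). You spell out the compatibility checks in more detail than the paper does, but the argument is the same.
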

\begin{proof}
  Take the manifold $M\times S^1$ with splitting hypersurface $Z \times S^1$. Since the addition of $\alpha \cup e_{S^1}$ and the constructions on the $\Spinc$ structures commute, The theorems for $M \times S^1$ imply the same ones for $M$. 
\end{proof}

\section{The situation in 3 dimensions}
\label{sec:exampl-3-dimens}

The lowest dimension that seems worth investigating is three. The fixed point set of a nontrivial $S^1$-action on $M$ consists of a finite union of circles. The fixed point formula becomes significantly easier.

\subsection{The fixed point formula revisited}
\label{sec:fixed-point-formula-1}

Since we are integrating over circles, and we already have a one-form $\alpha$ involved, we know that all integrals with further ``form-parts'' have to vanish. So it is enough to extract the ``pure representation part'' of each of the other three terms. For $\exp\big(\tfrac12 \tilde c_1(L|_F)\big)$, this is $z^{\tfrac12 \mu_F}$ (where $\mu_F$ is the degree of the representation of $S^1$ on $L|_F$). For $\hat A_e(NF)$ this is just the action of $S^1$ on the (real, two-dimensional) vector bundle $NF$ (called $z^{\tfrac12 n_F}$). So we get
\begin{align*}
  \chi(z) = \sum_F z^{\frac12 (\mu_F + n_F)}\cdot \int_F \alpha|_F.
\end{align*}

\subsection{Invariant hypersurfaces}
\label{sec:invar-hypers}

Let $Z \subset M$ be a 2-dimensional hypersurface, invariant under the action of $S^1$. The fixed point set of this action may consist of the whole of $Z$; otherwise it just consists of isolated points (or vanishes completely). Since the fixed point set on $Z$ is part of the fixed point set on $M$, which consists of odd-dimensional manifolds, the first case implies that $Z$ lies in a three-dimensional component $F$ of the fixed point set; since $F$ has to be open and closed, we have $F=M$, which is boring. 

So we rule out cases in which the action on $Z$ is trivial. This leaves us only with the cases $Z \cong S^2$ and $Z \cong T^2$, since surfaces of higher genus do not offer us non-trivial $S^1$-actions. We investigate the two cases:

\subsubsection{The 2-sphere}
\label{sec:2-sphere}

The 2-sphere lacks a fixed-point free $S^1$-action, so we cannot use it as a splitting hypersurface in the sense of quantization. Nevertheless, we can look at this surface in the context of connected sums. We equip $S^2$ with the rotational action (around the $z$-axis) of speed $l\in \Z$ (which is essentially the only action on $S^2$), calling it $S^2_l$. Now, if two manifolds $M_1$ and $M_2$ have open balls $B^3_{\pm l}$ bounding $S^2_l$ and $S^2_{-l}$ we can form an equivariant connected sum. $M_1 \# M_2$ inherits an equivariant $\Spinc$ structure, since both $\Spinc$ structures can be identified over $B^3_{\pm l}$. 

What happens to the fixed point set in this construction? The $S^1$-components not touching $S^2_{\pm l}$ do not change, the two 1-spheres going through the poles of $S^2_{\pm l}$ will be connected to one big $S^1$. Before we discuss the fixed points further, we have a look at our invariant first cohomology and line bundles:

Since the cohomology in dimension 1 and 2 is additive, we write every element of $H^1_{S^1}(M_1 \# M_2)$ as $\alpha_1 + \alpha_2$, where $\alpha_{i}$ are representatives of $H^1_{S^1}(M_i)$ with $\alpha_i|B^3_{\pm l} \equiv 0$. Every (equivariant) line bundle $L$ will be written as $L_1 \otimes L_2$, where $L_i$ is a line bundle over $M_i$ extended over $M_{1-i}$ by trivialising it over $B_{\pm l}^3$ in a non-equivariant fashion (i.e. it becomes topologically trivial with maybe nontrivial $S^1$-action). Notice that if $L_1$ and $L_2$ are the determinant line bundles for the two $\Spinc$ structures on $M_1$ and $M_2$, then the resulting $\Spinc$ structure will have $L_1 \otimes \overline{L_2}$ as determinant line bundle (in the sense just explained).

By $F_1$ and $F_2$ we denote the two fixed point components which are connected by $\#$. We have to analyse the resulting component $F$ of $M_1 \# M_2$ with the fixed point formula.

First of all, notice that the integral splits into $\int_{F_1} \alpha_1 \, + \, \int_{F_2} \alpha_2$, since we chose our one-forms to be supported only in their own ``half'' of the connected sum. The spinning number $l$ of the sphere implies that $n_F=l$. Since our determinant line bundle is given by $L_1 \otimes L_2$, we get $\mu_F = \mu_{F_1} + \mu_{F_2}$. So we have
\begin{align*}
  Q(M_1 \# M_2) &= Q(M_1) + Q(M_2) + D(F),
\end{align*}
where $D(F)$ is given as
\begin{align*}
  \big(z^{\frac12(l + \mu_{F_1} + \mu_{F_2})} - z^{\frac12(l + \mu_{F_1})}\big) \int_{F_1} \alpha_1 +  \big(z^{\frac12(l + \mu_{F_1} + \mu_{F_2})} - z^{\frac12(l + \mu_{F_2})}\big) \int_{F_2} \alpha_2.
\end{align*}
%Check all those signs and inversions....

\subsubsection{The 2-torus}
\label{sec:2-torus}

Every nontrivial $S^1$-action on the two-torus is free. From the discussion above we know that toruses with free $S^1$-action are the only possible splitting hypersurfaces. The reduction then  is an $S^1$ (without a group action). We have a short look at the qunatization of such an $S^1$

There is just  one $\Spinc$ structure, combined with a one-dimensional space $H^1_{S^1}(S^1) \cong H^1(S^1) \cong \Z$. The quantization map is just the Chern character $H^1(S^1) \cong K^1(S^1)$. 

So if $M_{\text{red}}$ consists of $k$ components, we get a map of the form
\begin{align*}
  H^1(S^1) \oplus H^1(S^1) \oplus \cdots \oplus H^1(S^1) \to K^1(S^1)
\end{align*}
  where we get one summand for every component of $M_{\text{red}}$. Please note, that in the $[Q,R]$ theorem, we just apply the quantization map to the subspace of $H^1(S^1) \oplus H^1(S^1) \oplus \cdots \oplus H^1(S^1) $ which consists of the image of $\iota^*\big(H^1(M)\big)$ where $\iota : M_{\text{red}} \to M$ is the embedding.

Let us now come to some examples.

%Let $M$ be 3-dimensional. Then our splitting surface $Z$ must be a finite union of 2-tori, since its quotient $Z/S^1$ is a finite union of $S^1$s and $S^1$-bundle over $S^1$ are always trivial.

%Since $M_{\text{red}}$ is then a finite union of $S^1$, we get an easy description of $Q_{S^1}$: 

\subsection{Examples}
\label{sec:3-sphere}

For the connected sums, by get a nice playground by repeatedly connecting $S^2 \times S^1$ along ball around the poles; the action is given by a twist of speed $l$ on the $S^2$ and the trivial action on $S^1$. Of course, for $S^2 \times S^1$, the index is just given by mapping the $S^2$-index with $\up$ to $S^2 \times S^1$ (the $S^2$-index is calculated in detail in \cite{thesisfuchs} and \cite{geomquant}).

If we look at $S^3$, we can do the following: Representing elements as $(z_1, z_2)$ with $|z|=1$, we find splitting hypersurfaces for fixed $0<|z| < 1$. For every $(n_1, n_2)\in \Z^2$, we get an $S^1$-action on $S^3$. For it to be free on a surface $Z$ we have to assume that $\gcd (n_1, n_2) = 1$. If both $n_1$ and $n_2$ are non-zero, there are no fixed points on $S^3$, so $Q({S^3})$ is the zero-map. In general, we know that $R\circ Q = Q \circ R = 0$, since $H^1_{S^1}(S^3) \to H^1(S^1)$ is the zero map which implies that the quantization on the reduced manifold is always zero.

%(Here we have to notice, that commutation between $[Q,R]$ means that we have coincidence only for those $\alpha$ which are produced by a reduction process). The $Q\circ R$-part has to be zero since the map $H^1_{S^1}(S^3) \to H^1(S^1)$ is zero. 

% \subsection{3-torus}
% \label{sec:3-torus}
What about $T^3 = \R^3 / \Z^3$?
We get a number of $S^1$-actions on $T^3$ of the form $(z^{n_1}, z^{n_2}, z^{n_3})$. Now we choose $Z$ to consist of two 2-tori, on which the induced action is free. The splitting construction shows that $M^\pm_{\text{cut}} \cong S^3$ with two fixed point circles. The ``Additivity under Cutting'' now implies that their quantizations are the opposite of each other.

\bibliographystyle{fabian}
\bibliography{hitchin2}

\begin{thebibliography}{Da~Silva00}
\providecommand{\url}[1]{\texttt{#1}}
\providecommand{\urlprefix}{URL }
\expandafter\ifx\csname urlstyle\endcsname\relax
  \providecommand{\doi}[1]{doi:\discretionary{}{}{}#1}\else
  \providecommand{\doi}{doi:\discretionary{}{}{}\begingroup
  \urlstyle{rm}\Url}\fi

\bibitem[Atiyah75]{atiyah1975spectral}
\textsc{Atiyah, M.F.}, \textsc{Patodi, VK} \& \textsc{Singer, IM}.
\newblock \emph{Spectral asymmetry and Riemannian geometry. I}.
\newblock In \emph{Mathematical Proceedings of the Cambridge Philosophical
  Society}, volume~77, (pages 43--69). Cambridge Univ Press (1975).

\bibitem[Atiyah76]{atiyah1976spectral}
\textsc{Atiyah, M.F.}, \textsc{Patodi, VK} \& \textsc{Singer, IM}.
\newblock \emph{Spectral asymmetry and Riemannian geometry. III}.
\newblock In \emph{Mathematical Proceedings of the Cambridge Philosophical
  Society}, volume~79, (pages 71--99). Cambridge Univ Press (1976).

\bibitem[Atiyah84]{atiyah1984anomalies}
\textsc{Atiyah, M.}
\newblock \emph{Anomalies and index theory}.
\newblock Supersymmetry and Supergravity Nonperturbative QCD, (pages 313--322)
  (1984).

\bibitem[Da~Silva00]{da2000quantization}
\textsc{Da~Silva, A.C.}, \textsc{Karshon, Y.} \& \textsc{Tolman, S.}
\newblock \emph{Quantization of presymplectic manifolds and circle actions}.
\newblock TRANSACTIONS-AMERICAN MATHEMATICAL SOCIETY, volume 352(2): pages
  525--552 (2000).

\bibitem[Fuchs08]{thesisfuchs}
\textsc{Fuchs, Shay}.
\newblock \emph{$\Spinc$ quantization, prequantization and cutting}.
\newblock Ph.D. thesis, University of Toronto (2008).

\bibitem[Meier11]{geomquant}
\textsc{Meier, Fabian}.
\newblock \emph{Geometric Quantization - An introduction to $\Spinc$
  quantization}.
\newblock \mbox{http://www.math.uni-bonn.de/people/fmeier} (2011).

\end{thebibliography}

%\printindex

\end{document}